\theoremstyle{plain}
\newtheorem{corollary}{Corollary}
\newtheorem{lemma}{Lemma}
\newtheorem{remark}{Remark}
\numberwithin{equation}{section}
\begin{document}
\title[Cantor distribution]{On moments of Cantor and related distributions. }
\author{Pawe\l\ J. Szab\l owski}
\address{Department of Mathematics and Information Sciences,\\
Warsaw University of Technology\\
ul Koszykowa 75, 00-662 Warsaw, Poland}
\email{pawel.szablowski@gmail.com}
\date{September 01, 2013}
\subjclass[2010]{11K41, 11B65, ; Secondary 60E10}
\keywords{Moments of infinite Bernoulli convolutions, moments of Cantor
distribution, Silver ratio, Pell numbers, Lucas numbers. }

\begin{abstract}
We provide several simple recursive formulae for the moment sequence of
infinite Bernoulli convolution. We relate moments of one infinite Bernoulli
convolution with others having different but related parameters. We give
examples relating Euler numbers to the moments of infinite Bernoulli
convolutions. One of the examples provides moment interpretation of Pell
numbers as well as new identities satisfied by Pell and Lucas numbers.
\end{abstract}

\maketitle

\section{Introduction}

The aim of this note is to add a few simple observations to the analysis of
the distribution of the so called fatigue symmetric walk (term appearing in 
\cite{Moris98}). These observations are based on the reformulation of known
results scattered through literature. We however pay more attention to the
moment sequences and less to the properties of distributions that produce
these moment sequences. It seems that the main novelty of the paper lies in
the probabilistic interpretation of Pell and Lucas numbers and easy proofs
of some identities satisfied by these numbers. However in order to place
these results in the proper context we recall definition and basic
properties of infinite Bernoulli convolutions. In deriving properties of
these convolutions we recall some known, important results.

The paper is organized as follows. After recalling definition and basic
facts on the fatigue random walks we concentrate on the moment sequences of
infinite Bernoulli convolutions. We formulate a corollary of the results of
the paper expressed in terms of moment sequence. This corollary formulated
in terms of number sequences provides identities of Pell and Lucas numbers
of even order (Remark \ref{Pell}).

\section{Infinite Bernoulli Convolutions}

Let $\left\{ X_{n}\right\} _{n\geq 1}$ be the sequence of i.i.d. random
variables such that $P(X_{1}\allowbreak =\allowbreak 1)\allowbreak
=\allowbreak P(X_{1}\allowbreak =\allowbreak -1)\allowbreak =\allowbreak
1/2. $ Further let $\left\{ c_{n}\right\} _{n\geq 1}$ be a sequence of reals
such that $\sum_{n\geq 1}c_{n}^{2}<\infty .$ We define random variable: 
\begin{equation*}
S\allowbreak =\allowbreak \sum_{n\geq 1}c_{n}X_{n}.
\end{equation*}%
By Kolmogorov 3 series theorem $S$ exists and moreover it is square
integrable. $ES^{2}\allowbreak =\allowbreak \sum_{n\geq 1}c_{n}^{2}.$
Obviously $ES\allowbreak =\allowbreak 0.$ Let $\varphi (t)$ denote
characteristic function of $S.$ By the standard argument we have $\varphi
(t)\allowbreak =\allowbreak E\exp (it\sum_{s\geq 1}c_{n}X_{n})\allowbreak
=\allowbreak E\prod_{n\geq 1}\exp (itc_{n}X_{n})\allowbreak =\allowbreak
\prod_{n\geq 1}(\exp (itc_{n})/2+\exp (-itc_{n})/2)\allowbreak =\allowbreak
\prod_{n\geq 1}\cos (tc_{n}).$

We will concentrate on the special form of the sequence $c_{n}$ namely we
will assume that $c_{n}\allowbreak =\allowbreak \lambda ^{-n}$ for some $%
\lambda >1.$

It is known (see \cite{JessWint35}) that for all $\lambda $ distribution of $%
S\allowbreak =\allowbreak S\left( \lambda \right) $ is continuous that is $%
P_{S}(\left\{ x\right\} )\allowbreak =\allowbreak 0$ for all $x\in \mathbb{R}%
.$ Moreover it is also known (see \cite{Solom95}, \cite{Solom96}) that if
for almost $\lambda \in (1,2]$ this distribution is absolutely continuous
and for almost all $\lambda \in (1,\sqrt{2}]$ it has square integrable
density. Garsia in \cite{Garsia62}, Theorem 1.8 showed examples of such $%
\lambda $ leading to absolutely continuous distribution. Namely such $%
\lambda \in (1,2)$ are the roots of monic polynomials $P$ with integer
coefficients such that $\left\vert P(0)\right\vert \allowbreak =\allowbreak
2 $ and $\lambda \prod_{\left\vert \alpha _{i}\right\vert >1}\left\vert
\alpha _{i}\right\vert \allowbreak =\allowbreak 2,$ where $\left\{ \alpha
_{i}\right\} $ are the remaining roots of $P$.

There are known (see \cite{Erdos39}, \cite{Erdos40}) countable instances of $%
\lambda \in (1,2]$ that this distribution is singular. We will denote by $%
\varphi _{\lambda }$ the characteristic function of $S\left( \lambda \right)
.$ Following \cite{Erdos39} we know that the values $\lambda $ such that $%
\varphi _{\lambda }(t)$ does not tend to zero as $t\longrightarrow \infty $
consequently related distribution is singular (by Riemann--Lebesgue Lemma)
are the so called Pisot or PV- numbers i.e. sole roots of such monic
irreducible polynomials $P$ with integer coefficients having the property
that all other roots have absolute values less than $1.$ We must then have $%
P(0)\allowbreak =\allowbreak 1.$ Examples of such numbers are the so called
'golden ratio' $\left( 1+\sqrt{5}\right) /2$ or the so called 'silver ratio' 
$1+\sqrt{2}.$ Moreover following \cite{Salem44} one knows that PV numbers
are the only numbers $\lambda \in (1,2]$ for which $\varphi _{\lambda }$
does not tend to zero. Of course singularity of the distribution of $S\left(
\lambda \right) $ can occur for $\lambda $ not being PV numbers.

For $\lambda >2$ it is known that the distribution of $S$ is singular \cite%
{KershWint35}.

To simplify notation we will write $\limfunc{supp}X,$ where $X$ is a random
variable meaning $\limfunc{supp}P_{X},$ where $P_{X}$ denotes distribution
of $X.$ Similarly $X\ast Y$ denotes random variable whose distribution is a
convolution of distributions of $X$ and $Y.$

We have simple Lemma.

\begin{lemma}
\label{_1}i) $\limfunc{supp}(S\left( \lambda \right) )\subset \left[ -\frac{1%
}{\lambda -1},\frac{1}{\lambda -1}\right] .$

In particular:

ia) if $\lambda \allowbreak =\allowbreak 2$ then $S\allowbreak \sim
\allowbreak U([-1,1])$ and

ib) if $\lambda \allowbreak =\allowbreak 3$ then $\limfunc{supp}(S+1/2)$ is
equal to Cantor set.

In general if $\lambda \allowbreak $ is a positive integer then $\limfunc{%
supp}(S+1/(\lambda -1))$ consist of all numbers of the form $\sum_{j\geq
1}r_{j}\lambda ^{-j}$ where $r_{j}\in \{0,2\}$. Moreover the distribution of 
$(S(\lambda )+1/(\lambda -1))$ is 'uniform' on this a set.

ii) $\forall k\geq 1:$%
\begin{equation}
\varphi _{\lambda }(\lambda ^{k}t)\allowbreak =\allowbreak \varphi _{\lambda
}(t)\prod_{j=0}^{k-1}\cos \left( \lambda ^{j}t\right) .  \label{basic}
\end{equation}

iii) $\forall k\geq 1:S\left( \lambda \right) \allowbreak \allowbreak \sim
\allowbreak \allowbreak \sum_{i=1}^{k}\lambda ^{i-1}S_{i}(\lambda ^{k}),$
where $S_{i}\left( \tau \right) $ ( $i\allowbreak \allowbreak =\allowbreak
1,\ldots ,k)\allowbreak $ are i.i.d. random variables each having
distribution $S\left( \tau \right) .$ Consequently $\varphi _{\lambda
}(t)\allowbreak =\allowbreak \prod_{j=1}^{k}\varphi _{\lambda ^{k}}\left(
\lambda ^{j-1}t\right) $

iv) Let us denote $m_{n}(\lambda )\allowbreak =\allowbreak ES(\lambda )^{n}.$
Then $\forall n\geq 1:m_{2n-1}(\lambda )\allowbreak =\allowbreak 0$ and 
\begin{equation*}
m_{2n}(\lambda )\allowbreak =\allowbreak \frac{1}{\lambda ^{2kn}-1}%
\sum_{j=0}^{n-1}\binom{2n}{2j}m_{2j}(\lambda )W_{2(n-j)}^{(k)}(\lambda ),
\end{equation*}%
with $m_{0}\allowbreak =\allowbreak 1$, where $W_{n}^{(1)}\allowbreak
=\allowbreak 1,$ $W_{n}^{(k)}\left( \lambda \right) \allowbreak =\allowbreak
\left. \frac{d^{n}}{dt^{n}}(\prod_{j=1}^{k}\cosh (\lambda
^{j-1}t))\right\vert _{t=0}\allowbreak $\newline
$=\allowbreak \frac{1}{2^{k-1}}\sum_{i_{1}=0,\ldots
,i_{k-1}=0}^{1}(1+\sum_{j=1}^{k}(2i_{j}-1)\lambda ^{j})^{2n}.$ \newline
In particular we have:%
\begin{eqnarray}
m_{2n}(\lambda ) &=&\frac{1}{\lambda ^{2n}-1}\sum_{j=0}^{n-1}m_{2j}(\lambda )%
\binom{2n}{2j},  \label{bezp} \\
m_{2n}\left( \lambda \right)  &=&\frac{1}{\lambda ^{4n}-1}\sum_{j=0}^{n-1}%
\binom{2n}{2j}m_{2j}(\lambda )\sum_{l=0}^{2(n-j)}\binom{2(n-j)}{2l}\lambda
^{2l}.  \label{l^4}
\end{eqnarray}

v) $\forall k\geq 1:m_{2k}(\lambda )\allowbreak \mathbb{=}\allowbreak \frac{%
-1}{\lambda ^{2k}-1}\sum_{j=0}^{k-1}\binom{2k}{2j}\lambda
^{2j}E_{2(k-j)}m_{2j}(\lambda ),$ where $E_{k}$ denotes $k-$th Euler number.
\end{lemma}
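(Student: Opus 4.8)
The plan is to establish the recursion in part v) by exploiting the self-similarity relation already recorded in part ii) of this very lemma, taken in its simplest form $k=1$. Setting $k=1$ in \eqref{basic} gives the functional equation $\varphi_{\lambda}(\lambda t)=\varphi_{\lambda}(t)\cos t$, which is the key identity tying the characteristic function at scale $\lambda t$ to its value at scale $t$. First I would recall that the even moments are encoded as the Taylor coefficients of $\varphi_{\lambda}$ at the origin, namely $\varphi_{\lambda}(t)=\sum_{j\ge 0}(-1)^{j}m_{2j}(\lambda)t^{2j}/(2j)!$, since all odd moments vanish by the symmetry noted in part iv). The idea is then to substitute this expansion into both sides of $\varphi_{\lambda}(\lambda t)=\varphi_{\lambda}(t)\cos t$ and compare coefficients of $t^{2k}$.

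Next I would bring in the Euler numbers through the secant expansion. The Euler numbers $E_{k}$ are defined by $\sec t = \sum_{k\ge 0}(-1)^{k}E_{2k}t^{2k}/(2k)!$, equivalently $\cos t\cdot\sum_{k\ge 0}(-1)^{k}E_{2k}t^{2k}/(2k)!=1$. To make $\cos t$ appear in a form that pairs naturally with the $E_{k}$, I would rewrite the functional equation as $\varphi_{\lambda}(\lambda t)\sec t=\varphi_{\lambda}(t)$, so that the left side becomes a product of the $\varphi_{\lambda}(\lambda t)$ series with the secant series. The left-hand side of $t^{2k}$ then produces a Cauchy product, and using the scaling $\varphi_{\lambda}(\lambda t)=\sum_{j\ge 0}(-1)^{j}m_{2j}(\lambda)\lambda^{2j}t^{2j}/(2j)!$ one obtains a convolution of the coefficients $m_{2j}(\lambda)\lambda^{2j}$ against the Euler-number coefficients $E_{2(k-j)}$.

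Carrying out the coefficient comparison, the coefficient of $t^{2k}$ on the left is, after multiplying through by $(2k)!$ and cleaning up signs, $\sum_{j=0}^{k}\binom{2k}{2j}\lambda^{2j}m_{2j}(\lambda)E_{2(k-j)}$, while on the right it is simply $m_{2k}(\lambda)$. Isolating the $j=k$ term on the left (which contributes $\lambda^{2k}m_{2k}(\lambda)E_{0}=\lambda^{2k}m_{2k}(\lambda)$ since $E_{0}=1$) and moving it to the right side gives $m_{2k}(\lambda)-\lambda^{2k}m_{2k}(\lambda)=\sum_{j=0}^{k-1}\binom{2k}{2j}\lambda^{2j}m_{2j}(\lambda)E_{2(k-j)}$, whence $m_{2k}(\lambda)=\tfrac{-1}{\lambda^{2k}-1}\sum_{j=0}^{k-1}\binom{2k}{2j}\lambda^{2j}m_{2j}(\lambda)E_{2(k-j)}$, which is exactly the asserted formula.

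The step I expect to be the main obstacle is bookkeeping the signs in the Cauchy product: the factors $(-1)^{j}$ from the moment expansion, the $(-1)^{k-j}$ from the secant expansion, and the overall $(-1)^{k}$ on the right must combine correctly, and it is precisely this sign accounting that produces the characteristic minus sign in front of the sum (in contrast to the plus sign in \eqref{bezp}). A small care point is justifying the term-by-term manipulation of power series: since $S(\lambda)$ is bounded by part i), $\varphi_{\lambda}$ is entire and the expansions converge in a neighborhood of the origin, so rearranging the Cauchy product and comparing coefficients is legitimate. Everything else is routine algebra once the functional equation $\varphi_{\lambda}(\lambda t)=\varphi_{\lambda}(t)\cos t$ and the secant generating function are in place.
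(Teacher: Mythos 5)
Your proof of part v) is correct, but note that you treat only v) of the five-part lemma, taking i)--iv) as given; this matches the paper's own ordering (part ii) and the vanishing of odd moments are established before v)), so there is no circularity, though the remaining parts are simply not covered by your proposal. For v) itself the paper is far terser and formally different: it cites the result of \cite{Szab13} that the inverse of the lower triangular matrix with $(i,j)$ entry $\binom{2i}{2j}$ is the matrix with entries $\binom{2i}{2j}E_{2(i-j)}$, and (implicitly) applies this to (\ref{bezp}) rewritten as $\lambda ^{2n}m_{2n}(\lambda )\allowbreak =\allowbreak \sum_{j=0}^{n}\binom{2n}{2j}m_{2j}(\lambda )$, then isolates the $j=n$ term exactly as you do. Your route --- multiplying the functional equation $\varphi _{\lambda }(\lambda t)=\varphi _{\lambda }(t)\cos t$ by $\sec t$ and comparing Taylor coefficients against $\sec t=\sum_{k\geq 0}(-1)^{k}E_{2k}t^{2k}/(2k)!$ --- is the generating-function incarnation of the same inversion: the cited matrix lemma is precisely the coefficient form of $\cos t\cdot \sec t=1$, i.e.\ of $\sum_{j=0}^{i}\binom{2i}{2j}E_{2(i-j)}=0$ for $i\geq 1$, under the standard correspondence between even exponential series and lower triangular matrices with entries $\binom{2i}{2j}a_{i-j}$. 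Your sign bookkeeping checks out (the product $(-1)^{j}(-1)^{k-j}=(-1)^{k}$ on the left cancels the $(-1)^{k}$ on the right, and $E_{0}=1$ makes the isolated top term $\lambda ^{2k}m_{2k}(\lambda )$, producing the minus sign in front of the sum), and your convergence remark (boundedness of $S(\lambda )$ from part i) makes $\varphi _{\lambda }$ entire, while the secant series converges for $\left\vert t\right\vert <\pi /2$) legitimately justifies the coefficient comparison. What your version buys is self-containedness --- no appeal to \cite{Szab13} --- and transparency about why Euler numbers appear at all, namely as the reciprocal series of the cosine in (\ref{basic}); what the paper's version buys is brevity and the structural view that (\ref{bezp}) and v) are literally inverse linear systems. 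One point worth making explicit in a final write-up: your secant expansion presumes the normalization $E_{0}=1$, $E_{2}=-1$, $E_{4}=5,\ldots $, which is indeed the convention used in \cite{Szab13} and in the statement of v).
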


\begin{proof}
i) First of all notice that $\frac{1}{\lambda -1}\allowbreak =\allowbreak
\sum_{n\geq 1}1/\lambda ^{n},$ hence $S+\frac{1}{\lambda -1}\allowbreak
=\allowbreak \sum_{n\geq 1}\frac{1}{\lambda ^{n}}(X_{n}+1).$ Now since $%
P(X_{n}+1\allowbreak =0)\allowbreak =\allowbreak P(X_{n}+1\allowbreak
=\allowbreak 2)\allowbreak =\allowbreak 1/2$ we see that $\limfunc{supp}(S+%
\frac{1}{\lambda -1})\subset \lbrack 0,\frac{2}{\lambda -1}].$ Notice also
that if $\lambda \allowbreak =\allowbreak 2$ then $S+1\allowbreak
=\allowbreak 2\sum_{n\geq 1}\frac{1}{2^{n}}Y_{n},$ where $P(Y_{n}\allowbreak
=\allowbreak 0)\allowbreak =\allowbreak P(Y_{n}\allowbreak =\allowbreak
1)\allowbreak =\allowbreak 1/2.$ In other words $(S+1)/2$ is any number
chosen from $[0,1]$ with equal chances that is $(S+1)/2$ has uniform
distribution on $[0,1].$

When $\lambda \allowbreak =\allowbreak 3$ we see that $S+1/2$ is a number
that can be written with the help of $^{\prime }0^{\prime }$ and $^{\prime
}2^{\prime }$ in ternary expansion. In other words $S+1/2$ is number drawn
from Cantor set with equal chances. For $\lambda $ integer we argue in the
similar way.

ii) We have $\varphi _{S}(\lambda ^{k}t)\allowbreak =\allowbreak
\prod_{n\geq 1}\cos (\lambda ^{k}t\frac{1}{\lambda ^{n}})\allowbreak
=\allowbreak \varphi _{S}\left( t\right) \prod_{j=0}^{k-1}\cos \left(
\lambda ^{j}t\right) $.

iii) Fix integer $k$. Notice that we have:%
\begin{gather*}
S\left( \lambda \right) \allowbreak =\allowbreak \sum_{n\geq 1}\lambda
^{-n}X_{n}\allowbreak =\allowbreak \\
\sum_{j\geq 1}\lambda ^{-kj}X_{kj}\allowbreak +\allowbreak \newline
\sum_{j\geq 1}\lambda ^{-kj+1}X_{kj-1}\allowbreak +\allowbreak \ldots
\allowbreak +\allowbreak \sum_{j\geq 1}\lambda
^{-kj+k-1}X_{kj-k+1}\allowbreak = \\
\allowbreak \sum_{m=1}^{k}\lambda ^{m-1}\sum_{j\geq 1}\left( \lambda
^{k}\right) ^{-j}X_{kj-m+1}.
\end{gather*}
Now since by assumption all $X_{i}$ are i.i.d. we deduce that $S_{i}(\lambda
^{k})$ are i.i.d. random variables with distribution defined by $\varphi
_{\lambda ^{k}}(t).$ Hence we have $\varphi _{\lambda }(t)\allowbreak
=\allowbreak \prod_{j=1}^{k}\varphi _{\lambda ^{k}}(\lambda ^{j-1}t).$

iv) First of all we notice that $\varphi _{S}(t)$ is an even function hence
all derivatives of odd order at zero are equal $0$. Secondly let $\psi
_{\lambda }(t)$ denote moment generating function of $S(\lambda ).$ It is
easy to notice that $\psi _{\lambda }(s)\allowbreak =\allowbreak \varphi
_{S\left( \lambda \right) }(-is).$ Let us denote $m_{2n}\left( \lambda
\right) \allowbreak =\allowbreak \psi _{\lambda }^{(2n)}(0).$ Basing on the
elementary formula 
\begin{equation*}
\cosh (\alpha )\cosh (\beta )\allowbreak =\allowbreak \frac{1}{2}(\cosh
(\alpha +\beta )+\cosh \left( \alpha -\beta \right) ),
\end{equation*}%
we can easily obtain by induction the following identity: 
\begin{equation*}
\prod_{j=0}^{k-1}\cosh (\lambda ^{j}t)\allowbreak =\allowbreak \frac{1}{%
2^{k-1}}\sum_{i_{1}=0,\ldots ,i_{k-1}=0}^{1}\cosh
(t(1+\sum_{j=1}^{k}(2i_{j}-1)\lambda ^{j})).
\end{equation*}%
Since $\left. (\cosh \alpha t)^{(2n)}\right\vert _{t=0}\allowbreak
=\allowbreak \alpha ^{2n}$, we have 
\begin{gather*}
\left. \left( \prod_{j=0}^{k-1}\cosh (\lambda ^{j}t)\right)
^{(2n)}\allowbreak \right\vert _{t=0}\allowbreak = \\
\allowbreak \frac{1}{2^{k-1}}\sum_{i_{1}=0,\ldots ,i_{k-1}=0}^{1}\left.
\left( \cosh (t(1+\sum_{j=1}^{k}(2i_{j}-1)\lambda ^{j}))\right)
^{(2n)}\right\vert _{t=0}\allowbreak =W_{2n}^{(k)}\left( \lambda \right) .
\end{gather*}%
$\allowbreak $Now using Leibnitz formula for differentiation applied to (\ref%
{basic}) we get 
\begin{equation*}
f^{(2n)}(\lambda ^{k}t)\lambda ^{2kn}=\sum_{j=0}^{2n}\binom{2n}{j}%
(\prod_{i=0}^{k-1}\cosh \lambda ^{i}t)^{\left( j\right) }f^{\left(
2n-j\right) }\left( t\right) .
\end{equation*}%
Setting $t=\allowbreak 0$ and using the fact that all derivatives of both $f$
and $\cosh t$ of odd order at zero are zeros we get the desired formula.

v) We use result of \cite{Szab13} that states that for each $N$ inverse of
lower triangular matrix of degree $N\times N$ with $(i,j)$ entry $\binom{2i}{%
2j}$ is the lower triangular matrix with $(i,j)-th$ entry equal to $\binom{2i%
}{2j}E_{2(i-j)}.$
\end{proof}

\begin{remark}
Formula (\ref{bezp}) is known in a slightly different form it appeared in 
\cite{Lad92}, \cite{Hosk94} and \cite{Arnold11}. 
\end{remark}

\begin{remark}
Notice that polynomials $\left\{ W_{n}^{(k)}(\lambda )\right\} _{k,n\geq 1}$
satisfy the following recursive relationship for $k>1:$%
\begin{equation*}
W_{n}^{(k)}\left( \lambda \right) \allowbreak =\allowbreak \sum_{j=0}^{n}%
\binom{2n}{2j}\lambda ^{2j}W_{j}^{\left( k-1\right) }\left( \lambda \right) ,
\end{equation*}%
with $W_{n}^{(1)}(\lambda )\allowbreak =\allowbreak 1.$ Hence its generating
functions satisfy $\Theta _{k}(t)$ the following relationship 
\begin{equation*}
\Theta _{k}(t,\lambda )=\Theta _{k-1}(\lambda t,\lambda )\cosh t,
\end{equation*}%
where we have have denoted: $\Theta _{k}(t,\lambda )\allowbreak =\allowbreak
\sum_{n\geq 0}\frac{t^{2n}}{\left( 2n\right) !}W_{n}^{(k)}(\lambda ).$
\end{remark}

\begin{remark}
Notice that the above mentioned lemma provides an example of two singular
distributions whose convolution is a uniform distribution. Namely we have $%
S(4)\ast 2S(4)\allowbreak =\allowbreak S(2).$ Similarly we have $%
S(2)\allowbreak =\allowbreak S\left( 8\right) \ast 2S(8)\ast 4S(8)$ or $%
S(2)\allowbreak =\allowbreak S(2^{k})\ast \ldots \ast 2^{k-1}S(2^{k}).$ The
first example was already noticed by Kersher and Wintner in \cite%
{KershWint35},(22a).
\end{remark}

\begin{remark}
We can deduce even more from theses examples namely following the result of
Kersher \cite{Kersh36}, p.451 that characteristic functions $\varphi _{n}(t)$
of $S\left( n\right) $ (where $n$ is an integer $>2$) do not tend to zero as 
$t\longrightarrow \infty .$ Thus since we have $\varphi _{4}(t)\varphi
_{4}(2t)\allowbreak =\allowbreak \sin t/t$ and $\varphi _{8}(t)\varphi
_{8}(2t)\varphi _{8}(4t)\allowbreak =\allowbreak \sin t/t$ we deduce that if 
$t_{k}\longrightarrow \infty $ is a sequence such that $\left\vert \varphi
_{4}(t_{k})\right\vert >\varepsilon >0$ for suitable $\varepsilon $ then $%
\varphi _{4}(2t_{k})\longrightarrow 0.$ Similarly if $t_{k}\longrightarrow
\infty $ such that $\left\vert \varphi _{8}(t_{k})\right\vert >\varepsilon >0
$ then $\varphi _{8}(2t_{k})\varphi _{8}(4t_{k})\longrightarrow 0$. Similar
observations can be made can be made in more general situation. As it is
known from the papers of Erd\H{o}s \cite{Erdos39}, \cite{Erdos40}  the
situation that $\left\vert \varphi _{\lambda }(t_{k})\right\vert
>\varepsilon >0$ for some sequence $t_{k}\longrightarrow \infty $ occurs
when $\lambda $ is a Pisot number (briefly PV-number). On the other hand as
it is known roots of Pisot numbers are not Pisot, hence using above
mentioned result of Salem $\left\vert \varphi _{\lambda
^{1/k}}(t)\right\vert \longrightarrow 0$ as $t\longrightarrow \infty ,$
where $\lambda $ is some PV number and $k>1$ any integer. But we have $%
\varphi _{\lambda ^{1/k}}(t_{n})\allowbreak =\allowbreak
\prod_{j=1}^{k}\varphi _{\lambda }\left( \lambda ^{(j-1)/k}t_{n}\right)
\longrightarrow 0,$ where $t_{n}\longrightarrow \infty $ is such a sequence
that $\left\vert \varphi _{\lambda }(t_{n})\right\vert >\varepsilon >0.$
\end{remark}

\begin{remark}
One knows that if $\lambda \allowbreak =\allowbreak q/p$ where $p$ and $q$
are relatively prime integers and $p>1$ then $\varphi _{\lambda
}(t)\allowbreak =\allowbreak O((\log |t|)^{-\gamma })$ where $\gamma
\allowbreak =\allowbreak \gamma \left( p,q\right) >0$ as $t\longrightarrow
\infty $ (see \cite{Kersh36},(3)). Besides we know that then distribution of 
$S\left( \lambda \right) $ is singular. Hence from our considerations it
follows that if $\lambda \allowbreak =\allowbreak (q/p)^{1/k}$ for some
integer $k$ then $\varphi _{\lambda }(t)\allowbreak =\allowbreak O((\log
|t|)^{-k\gamma }).$ Is it also singular?
\end{remark}

\section{Moment sequences}

To give connection of certain moment sequences with some known integer
sequences let us remark the following:

\begin{remark}
i) 
\begin{eqnarray*}
9^{n}m_{2n}(3)\allowbreak &=&\allowbreak \allowbreak \sum_{j=0}^{n}\binom{2n%
}{2j}m_{2j}(3), \\
81^{n}m_{2n}(3) &=&\allowbreak \allowbreak \sum_{j=0}^{n}\binom{2n}{2j}%
m_{2j}(3)(2^{4(n-j)-1}+2^{2\left( n-j\right) -1}).
\end{eqnarray*}%
ii) 
\begin{eqnarray*}
5^{n}m_{2n}\left( \sqrt{5}\right) &=&\sum_{j=0}^{n}\binom{2n}{2j}m_{2j}(%
\sqrt{5}), \\
25^{n}m_{2n}\left( \sqrt{5}\right) &=&\sum_{j=0}^{n}\binom{2n}{2j}m_{2j}(%
\sqrt{5})4^{n-j}L_{2(n-j)}/2,
\end{eqnarray*}%
where $L_{n}$ denotes $n-$th Lucas number defined below.
\end{remark}

\begin{proof}
i) The first assertion is a direct application of (\ref{bezp}) while in
proving the second one we use (\ref{l^4}) and the fact that $\sum_{j=0}^{n}%
\binom{2n}{2j}9^{j}\allowbreak =\allowbreak 4^{n}(4^{n}+1)/2$ as shown by 
\cite{OEIS}, (seq. no. A026244). ii) Again the first statement follows (\ref%
{bezp}) while the second follows (\ref{l^4}) and the fact that $%
\sum_{j=0}^{n}\binom{2n}{2j}5^{j}\allowbreak =\allowbreak 4^{n}T_{n}(3/2),$
where $T_{n}$ denotes Chebyshev polynomial of the first kind. (\cite{OEIS},
seq. no. A099140). Further we use the fact that $T_{n}(3/2)\allowbreak
=\allowbreak L_{2n}/2.$ (\cite{OEIS}, seq. no. A005248).
\end{proof}

We also have the following Lemma.

\begin{lemma}
\label{_2}$\forall n\geq 1,k\geq 2:$%
\begin{equation}
m_{2n}(\lambda )\allowbreak =\allowbreak \sum_{i_{1},\ldots ,i_{k}=0}^{n}%
\frac{(2n)!}{(2i_{1})!\ldots (2i_{k})!}\lambda ^{2(i_{2}+2i_{3}\ldots
(k-1)i_{k})}\prod_{j=1}^{k}m_{2i_{j}}\left( \lambda ^{k}\right) .  \label{_K}
\end{equation}
In particular:%
\begin{eqnarray}
m_{2n}(\lambda )\allowbreak &=&\allowbreak \sum_{j=0}^{n}\binom{2n}{2j}%
\lambda ^{2j}m_{2j}(\lambda ^{2})m_{2n-2j}(\lambda ^{2}),  \label{kw} \\
m_{2n}(\lambda )\allowbreak &=&\allowbreak \sum_{\substack{ i,j=0  \\ %
i+j\leq n}}\frac{(2n)!}{(2i)!(2j)!(2(n-i-j))!}\times  \label{kub} \\
&&\lambda ^{2i}\lambda ^{4j}m_{2j}(\lambda ^{3})m_{2i}(\lambda
^{3})m_{2n-2i-2j}(\lambda ^{3}).  \notag
\end{eqnarray}
\end{lemma}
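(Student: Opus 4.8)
The plan is to read off the identity from the self-similar decomposition of $S(\lambda )$ established in Lemma \ref{_1} iii). That part gives, for each fixed $k\geq 2$, the equality in distribution $S(\lambda )\sim \sum_{i=1}^{k}\lambda ^{i-1}S_{i}(\lambda ^{k})$, where $S_{1}(\lambda ^{k}),\ldots ,S_{k}(\lambda ^{k})$ are independent copies of $S(\lambda ^{k})$. Since $S(\lambda )$ is bounded (Lemma \ref{_1} i)), all of its moments exist, so I may take the $(2n)$-th moment of both sides to obtain $m_{2n}(\lambda )=E\big(\sum_{i=1}^{k}\lambda ^{i-1}S_{i}(\lambda ^{k})\big)^{2n}$; the whole proof then reduces to expanding this power.

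First I would apply the multinomial theorem. Abbreviating $S_{i}:=S_{i}(\lambda ^{k})$, this gives
\[
\Big(\sum_{i=1}^{k}\lambda ^{i-1}S_{i}\Big)^{2n}=\sum_{a_{1}+\cdots +a_{k}=2n}\frac{(2n)!}{a_{1}!\cdots a_{k}!}\prod_{i=1}^{k}\lambda ^{(i-1)a_{i}}S_{i}^{a_{i}}.
\]
Taking expectations and invoking the independence of the $S_{i}$, the expectation of the product factorises as $\prod_{i=1}^{k}E(S_{i}^{a_{i}})=\prod_{i=1}^{k}m_{a_{i}}(\lambda ^{k})$.

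The decisive simplification comes from Lemma \ref{_1} iv) (equivalently, from the evenness of $\varphi _{\lambda ^{k}}$): every odd-order moment $m_{a_{i}}(\lambda ^{k})$ vanishes. Hence only multi-indices with all $a_{i}$ even survive, and I would set $a_{i}=2i_{i}$. Under this substitution the constraint $a_{1}+\cdots +a_{k}=2n$ becomes $i_{1}+\cdots +i_{k}=n$, the coefficient becomes $\frac{(2n)!}{(2i_{1})!\cdots (2i_{k})!}$, the accumulated exponent of $\lambda $ becomes $\sum_{i=1}^{k}2(i-1)i_{i}=2(i_{2}+2i_{3}+\cdots +(k-1)i_{k})$, and the moment product becomes $\prod_{j=1}^{k}m_{2i_{j}}(\lambda ^{k})$; this is exactly (\ref{_K}). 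The special cases (\ref{kw}) and (\ref{kub}) then follow by putting $k=2$ and $k=3$ and eliminating one index through the constraint $\sum_{j}i_{j}=n$ (for $k=2$ set $i_{2}=j$, $i_{1}=n-j$; for $k=3$ set $i_{2}=i$, $i_{3}=j$, $i_{1}=n-i-j$).

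I do not anticipate any genuine obstacle: the result is a single application of the multinomial theorem combined with independence and the vanishing of odd moments. The only point demanding care is the bookkeeping that converts the unrestricted multinomial sum into the even-index sum of (\ref{_K})---in particular, correctly collecting the exponent of $\lambda $ as $2\sum_{i}(i-1)i_{i}$ and observing that the displayed sum in (\ref{_K}) carries the implicit constraint $i_{1}+\cdots +i_{k}=n$, without which the multinomial coefficient would not be meaningful.
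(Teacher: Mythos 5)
Your proof is correct and is exactly the argument the paper intends: its entire proof of Lemma \ref{_2} is the single line that (\ref{_K}) ``follows directly'' from Lemma \ref{_1} iii), and you have simply supplied the implicit details (multinomial expansion, factorisation by independence, vanishing of odd moments, and the reindexing $a_{i}=2i_{i}$ with the constraint $i_{1}+\cdots +i_{k}=n$). No discrepancy with the paper's route.
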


\begin{proof}
(\ref{_K}) follows directly Lemma \ref{_1}, iii).
\end{proof}

As a corollary we get the following four observations:

\begin{corollary}
\label{p22}i) $\forall n\geq 1:4^{n}\allowbreak =\allowbreak \sum_{j=0}^{n}%
\binom{2n+1}{2j+1}$ and $1\allowbreak =\allowbreak \sum_{j=0}^{n}\binom{2n+1%
}{2j+1}4^{j}E_{2(n-j)}.$

ii) $S\left( \sqrt{2}\right) \allowbreak $ has density%
\begin{equation*}
g(x)\allowbreak =\allowbreak \left\{ 
\begin{array}{ccc}
\sqrt{2}/4 & if & \left\vert x\right\vert \leq \sqrt{2}-1, \\ 
\sqrt{2}(\sqrt{2}+1-\left\vert x\right\vert )/8 & if & \sqrt{2}-1\leq
\left\vert x\right\vert \leq \sqrt{2}+1, \\ 
0 & if & \left\vert x\right\vert >1+\sqrt{2}.%
\end{array}%
\right.
\end{equation*}

iii) Let us denote $\delta _{n}\allowbreak =\allowbreak (\sqrt{2}+1)^{n}$,
then 
\begin{equation*}
m_{2n}(\sqrt{2})\allowbreak =\allowbreak \left( \delta _{2n+2}-\delta
_{2n+2}^{-1}\right) /(4\sqrt{2}(n+1)(2n+1)).
\end{equation*}
\end{corollary}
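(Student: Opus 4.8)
The plan is to specialize the general machinery of Lemma \ref{_1} to the two concrete parameters $\lambda=2$ and $\lambda=\sqrt{2}$, for which the distribution of $S(\lambda)$ is completely explicit, and then to read off the three assertions.

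For part i) I would begin from Lemma \ref{_1}, ia), which gives $S(2)\sim U([-1,1])$, so that $m_{2n}(2)=\tfrac12\int_{-1}^{1}x^{2n}\,dx=1/(2n+1)$. The elementary identity $\binom{2n}{2j}/(2j+1)=\binom{2n+1}{2j+1}/(2n+1)$ converts the two recursions of Lemma \ref{_1} into statements about odd binomial coefficients. Substituting $m_{2j}(2)=1/(2j+1)$ into (\ref{bezp}) and clearing the common factor $1/(2n+1)$ yields $4^{n}-1=\sum_{j=0}^{n-1}\binom{2n+1}{2j+1}$; adjoining the top term $\binom{2n+1}{2n+1}=1$ gives the first identity (which is also just the sum of the odd-indexed binomial coefficients of $(1+1)^{2n+1}$, a convenient cross-check). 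The same substitution into Lemma \ref{_1}, v) produces $-(4^{n}-1)=\sum_{j=0}^{n-1}\binom{2n+1}{2j+1}4^{j}E_{2(n-j)}$, and adjoining the $j=n$ term $\binom{2n+1}{2n+1}4^{n}E_{0}=4^{n}$ collapses the right-hand side to $1$, giving the second identity.

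For parts ii) and iii) the engine is Lemma \ref{_1}, iii) with $k=2$: since $(\sqrt2)^2=2$, it gives $S(\sqrt2)\sim S_{1}(2)+\sqrt2\,S_{2}(2)$ with $S_{1}(2),S_{2}(2)$ independent copies of $U([-1,1])$. Thus $S(\sqrt2)$ is the convolution of $U([-1,1])$ with $U([-\sqrt2,\sqrt2])$. Convolving the two uniform densities is a routine trapezoidal computation for $U([-a,a])\ast U([-b,b])$ with $a=1<b=\sqrt2$: it produces the flat value $1/(2b)=\sqrt2/4$ on $|x|\le b-a=\sqrt2-1$ and the linear piece $(a+b-|x|)/(4ab)=\sqrt2(\sqrt2+1-|x|)/8$ on $\sqrt2-1\le|x|\le\sqrt2+1$, with support ending at $a+b=1+\sqrt2$. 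This is exactly the density $g$ claimed in ii).

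Part iii) is then the direct moment integral $m_{2n}(\sqrt2)=2\int_{0}^{1+\sqrt2}x^{2n}g(x)\,dx$, split at $x=\sqrt2-1$ into a flat and a sloped piece and evaluated by the power rule. The one step I expect to require care, and the only real obstacle, is the algebraic collapse of the result: writing $a=\sqrt2-1$ and $b=\sqrt2+1$, the reciprocal relation $(\sqrt2-1)(\sqrt2+1)=1$, i.e. $a=b^{-1}$, is what makes everything work. After combining the flat-part contribution $\tfrac{\sqrt2}{2}\,a^{2n+1}/(2n+1)$ with the sloped-part integral $\tfrac{\sqrt2}{4}\bigl[b\,(b^{2n+1}-a^{2n+1})/(2n+1)-(b^{2n+2}-a^{2n+2})/(2n+2)\bigr]$ over the common denominator $(2n+1)(2n+2)$, one substitutes $ab=1$ and $a^{2}=3-2\sqrt2$; the odd powers $a^{2n}$ and $a^{2n+1}$ then cancel and the numerator reduces cleanly to $b^{2n+2}-a^{2n+2}=\delta_{2n+2}-\delta_{2n+2}^{-1}$. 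Tracking the prefactor $\sqrt2/4$ together with $1/\bigl((2n+1)(2n+2)\bigr)=1/\bigl((2n+1)\cdot 2(n+1)\bigr)$ and using $\sqrt2/8=1/(4\sqrt2)$ gives the stated closed form. The bookkeeping of this cancellation is the single computation worth doing explicitly; the rest is specialization of the lemma.
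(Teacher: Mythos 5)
Your proof is correct and takes essentially the same route as the paper: part i) substitutes $m_{2n}(2)=1/(2n+1)$ into (\ref{bezp}) and Lemma \ref{_1} v), and parts ii)--iii) rest on the decomposition $S(\sqrt{2})\sim S(2)+\sqrt{2}\,S(2)$ (Lemma \ref{_1} iii) with $k=2$) followed by the uniform--uniform convolution and the split moment integral, exactly as in the paper. The only difference is that you spell out the final cancellation via $(\sqrt{2}-1)(\sqrt{2}+1)=1$, which the paper leaves as ``straightforward calculations''.
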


\begin{proof}
Since for $\lambda \allowbreak =\allowbreak 2$ random variable $S\allowbreak
\sim \allowbreak U[-1,1]$ its moments are equal to $ES^{2n}\allowbreak
=\allowbreak \frac{1}{2n+1}.$ Now we use Lemma \ref{_1} iii) and iv).

ii) From the proof of Lemma \ref{_2} it follows that $S\left( \sqrt{2}%
\right) \allowbreak \sim \allowbreak S\left( 2\right) \allowbreak
+\allowbreak \sqrt{2}S\left( 2\right) .$ Now keeping in mind that $S\left(
2\right) \allowbreak \sim \allowbreak U(-1,1)$ we deduce that $%
g(x)\allowbreak =\allowbreak \frac{\sqrt{2}}{8}\int_{-1}^{1}h(x-t)dt,$ where 
$h\left( x\right) \allowbreak =\allowbreak \left\{ 
\begin{array}{ccc}
\frac{\sqrt{2}}{4} & if & \left\vert x\right\vert \leq \sqrt{2}, \\ 
0 & if & otherwise.%
\end{array}%
\right. .$ iii) By straightforward calculations we get $m_{2n}(\sqrt{2}%
)\allowbreak =\allowbreak 2\int_{0}^{\sqrt{2}+1}x^{2n}g(x)dx\allowbreak
=\allowbreak \frac{\sqrt{2}}{2}\int_{0}^{\sqrt{2}-1}x^{2n}dx\allowbreak
+\allowbreak \frac{\sqrt{2}}{4}\int_{\sqrt{2}-1}^{\sqrt{2}+1}x^{2n}(\sqrt{2}%
+1-x)dx.$
\end{proof}

\begin{remark}
\label{rem1}Let us apply formulae: (\ref{kw}), (\ref{bezp}), (\ref{l^4}) and
observe by direct calculation that $2\sum_{j=0}^{n}\binom{2n}{2j}%
2^{j}\allowbreak =\allowbreak (1+\sqrt{2})^{2n}\allowbreak +\allowbreak (1-%
\sqrt{2})^{2n}$. We get the following identities: $\forall n\geq 1:$%
\begin{eqnarray*}
m_{2n}(\sqrt{2})\allowbreak  &=&\allowbreak \sum_{j=0}^{n}\binom{2n}{2j}%
\frac{2^{j}}{(2n-2j+1)(2j+1)} \\
&=&\frac{1}{2^{n}-1}\sum_{j=0}^{n-1}\binom{2n}{2j}m_{2j}(\sqrt{2}%
)\allowbreak  \\
&=&\frac{1}{4^{n}-1}\sum_{j=0}^{n-1}\binom{2n}{2j}m_{2j}(\sqrt{2})\tau
_{2(n-j)},
\end{eqnarray*}%
$\allowbreak \allowbreak $where $\tau _{n}\allowbreak =\allowbreak
(1+(-1)^{n})(\delta _{n}+\delta _{n}^{-1})/4.$
\end{remark}

Now let us recall the definition of the so called Pell and Pell--Lucas
numbers. Using sequence $\delta _{n}$ Pell numbers $\left\{ P_{n}\right\} $
and Pell--Lucas numbers $\left\{ Q_{n}\right\} $ are defined 
\begin{eqnarray}
P_{n}\allowbreak &=&\allowbreak (\delta _{n}+(-1)^{n+1}\delta _{n}^{-1})/(2%
\sqrt{2}),  \label{pell} \\
Q_{n}\allowbreak &=&\allowbreak \delta _{n}+\left( -1\right) ^{n}\delta
_{n}^{-1},  \label{P-L}
\end{eqnarray}%
where $\delta _{n}$ is defined in \ref{p22},iii).

Using these definitions we can rephrase assertions of Corollary \ref{p22}
and Remark \ref{rem1} adding to recently discovered (\cite{Sant06}, \cite%
{Benj08}) new identities satisfied by Pell and Pell-Lucas numbers and of
course probabilistic interpretation of Pell numbers.

\begin{remark}
\label{Pell}i) $m_{2n}(\sqrt{2})\allowbreak =\allowbreak \frac{P_{2n+2}}{%
(2n+2)(2n+1)},$ $\tau _{2n}\allowbreak =\allowbreak Q_{2n}/2.$

ii) $\forall n\geq 1:$%
\begin{eqnarray}
P_{2n+2}\allowbreak &=&\allowbreak \sum_{j=0}^{n}\binom{2n+2}{2j+1}2^{j},
\label{_pn} \\
Q_{2n} &=&2\sum_{j=0}^{n}\binom{2n}{2j}2^{j},  \label{_qn} \\
2^{n-1}P_{2n}\allowbreak &=&\allowbreak \sum_{j=0}^{n}\binom{2n}{2j}P_{2j},
\label{_spn} \\
2^{2n-1}P_{2n}\allowbreak &=&\sum_{j=0}^{n}\binom{2n}{2j}P_{2j}Q_{2(n-j)},
\label{_spqn} \\
\sum_{j=0}^{n}\binom{2n}{2j}(1+\sqrt{2})^{2j}\allowbreak &=&\allowbreak
2^{n-1}+2^{n-2}Q_{2n}+2^{n-1}\sqrt{2}P_{2n}.  \label{_ssilv}
\end{eqnarray}
\end{remark}

\begin{proof}
Only last statement requires justification. First we find that $%
\sum_{j=0}^{n}\binom{2n}{2j}Q_{2j}\allowbreak =\allowbreak 2^{n}(1+Q_{2n}/2)$
using (\ref{_qn}). Then we use (\ref{pell}), (\ref{P-L}) and (\ref{_spn}).
\end{proof}

\end{document}